\newtheorem{theorem}{Theorem}
\newtheorem{proposition}[theorem]{Proposition}
\newcommand{\mbn}{{\mathcal B}_n}
\newcommand{\des}{{\rm des\,}}
\newcommand{\msn}{{\mathcal S}_n}
\newcommand{\pn}{{\mathcal P}(n,n-r)}
\newcommand{\lrf}[1]{\lfloor #1\rfloor}
\newcommand{\stirling}[2]{\genfrac{[}{]}{0pt}{}{#1}{#2}}
\newcommand{\Stirling}[2]{\genfrac{\{}{\}}{0pt}{}{#1}{#2}}
\newcommand{\Eulerian}[2]{\genfrac{<}{>}{0pt}{}{#1}{#2}}
\newcommand{\lah}[2]{\genfrac{\lfloor}{\rfloor}{0pt}{}{#1}{#2}}
\title{Some combinatorial sequences associated with context-free grammars\footnote{This work is supported by~NSFC (11126217) and the Fundamental Research Funds for the Central Universities (N100323013).}}
\author
{Shi-Mei Ma \footnote{ {\it Email address:}
shimeima@yahoo.com.cn (S.-M. Ma)} }
\date{\footnotesize School of Mathematics and Statistics,
        Northeastern University at Qinhuangdao,\\ Hebei 066004,
        China}
\begin{document}

\maketitle

\begin{abstract}
The purpose of this paper is to show that some combinatorial sequences, such as second-order Eulerian numbers and Eulerian numbers of type $B$, can be generated by context-free grammars.
\bigskip\\
{\sl Keywords:}\quad Context-free grammars; Combinatorial sequences; Permutations; Partitions
\end{abstract}
\section{Introduction}\label{sec:intro}
The grammatical method was introduced by Chen~\cite{Chen93} in the study of exponential structures in combinatorics. Let $A$ be an alphabet whose letters are regarded as independent commutative indeterminates. A context-free grammar $G$ over $A$ is defined as a set of substitution rules replacing a letter in
$A$ by a formal function over $A$.
Following Chen~\cite{Chen93}, the formal derivative $D$ is a linear operator defined with respect to a context-free grammar $G$.
For any formal functions $u$ and $v$, we have
$$D(u+v)=D(u)+D(v),\quad D(uv)=D(u)v+uD(v) \quad and\quad D(f(u))=\frac{\partial f(u)}{\partial u}D(u),$$
where $f(x)$ is a analytic function. By definition, we have $D^{n+1}(u)=D(D^n(u))$ for all $u$.
For example, if $G=\{x\rightarrow xy, y\rightarrow y\}$, then $$D(x)=xy,D(y)=y,D^2(x)=x(y+y^2),D^3(x)=x(y+3y^2+y^3).$$
In~\cite{Dumont96}, Dumont considered chains of general substitution rules on words.
It is a hot topic to explore the connection between combinatorics and context-free grammars. The reader is referred to~\cite{Chen121,Chen122,Dumont962,Ma12} for recent
progress on this subject.

We now recall some definitions, and fix some notation, that will be used throughout the
rest of this paper.
Let $[n]=\{1,2,\ldots,n\}$.
Let $\msn$ denote the symmetric group of all permutations of $[n]$.
The {\it Eulerian number} $\Eulerian{n}{k}$ enumerates the number of permutations in $\msn$
with $k$ descents (i.e., $i<n,\pi(i)>\pi(i+1)$) as well as the number permutations in $\msn$ which have $k$ excedances (i.e., $i<n,\pi(i)>i)$) (see~\cite[A008292]{Sloane}). The numbers $\Eulerian{n}{k}$
satisfy the recurrence relation
$$\Eulerian{n}{k}=(k+1)\Eulerian{n-1}{k}+(n-k)\Eulerian{n-1}{k-1},$$
the initial condition $\Eulerian{0}{0}=1$ and boundary conditions $\Eulerian{0}{k}=0$ for $k\geq 1$.
Let $$A_n(t)=\sum_{k=0}^{n-1}\Eulerian{n}{k}t^k$$ be the {\it Eulerian polynomial}.
The exponential generating function for $A_n(t)$ is
\begin{equation}\label{Atz}
A(t,z)=1+\sum_{n\geq 1}tA_n(t)\frac{z^n}{n!}=\frac{1-t}{1-te^{z(1-t)}}.
\end{equation}

We now consider a restricted version of Eulerian numbers.
Let $r$ be a nonnegative integer. Denote by $\pn$ the set of permutations of $n$ numbers taken $n-r$ at a time.
Let $\sigma\in\pn$. If $\sigma(i)>i$, then we say that $\sigma$ has an excedance at position $i$, where $1\leq i\leq n-r$.
The {\it $r$-restricted Eulerian number}, denoted by $\Eulerian{n}{k}_{r}$, is defined as the number of permutations in $\pn$ having $k$ excedances (see~\cite[A144696 ,~A144697,~A144698,~A144699]{Sloane} for details).

A {\it Stirling permutation} of order $n$ is a permutation of the multiset $\{1,1,2,2,\ldots,n,n\}$ such that for each $i$, $1\leq i\leq n$, the elements lying between the two occurrences of $i$ are greater than $i$.
{\it The second-order Eulerian number} $\left < \!\! \left < {n \atop k} \right > \!\! \right >$ is
the number of  Stirling permutation of order $n$ with
$k$ ascents (see~\cite[A008517]{Sloane}).
The combinatorial interpretations for the second-order Eulerian numbers $\left < \!\! \left < {n \atop k} \right > \!\! \right >$ have been
extensively investigated (see~\cite{Bona08,Gessel78,Haglund12}).
It is well known that the numbers $\left < \!\! \left < {n \atop k} \right > \!\! \right >$ satisfy the recurrence relation
\begin{equation}\label{Eu-second-recu}
\left < \!\! \left < {n+1 \atop k} \right > \!\! \right >=(2n-k+1)\left < \!\! \left < {n \atop k-1} \right > \!\! \right >+(k+1)\left < \!\! \left < {n \atop k} \right > \!\! \right >,
\end{equation}
with initial condition $\left < \!\! \left < {1 \atop 0} \right > \!\! \right >=1$ and boundary conditions $\left < \!\! \left < {n \atop k} \right > \!\! \right >=0$ for $n\leq k$ or $k<0$ (see~\cite[A008517]{Sloane}).

Let $B_n$ denote the set of signed permutations of $\pm[n]$ such that $\pi(-i)=-\pi(i)$ for all $i$, where $\pm[n]=\{\pm1,\pm2,\ldots,\pm n\}$.
Let
$${B}_n(x)=\sum_{k=0}^nB(n,k)x^{k}=\sum_{\pi\in \mbn}x^{\des_B(\pi)},$$
where
$$\des_B=|\{i\in[n]:\pi(i-1)>\pi({i})\}|$$
with $\pi(0)=0$.
The polynomial $B_n(x)$ is called an {\it Eulerian polynomial of type $B$}, while $B(n,k)$ is called an {\it Eulerian number of type $B$} (see~\cite[A060187]{Sloane}).
The first few of these polynomials are listed below:
$$B_0(x)=1,B_1(x)=1+x,B_2(x)=1+6x+x^2,B_3(x)=1+23x+23x^2+x^3.$$
The numbers $B(n,k)$ satisfy
the recurrence relation
\begin{equation}\label{Bnk-Euleriannum}
B(n+1,k)=(2n-2k+3)B(n,k-1)+(2k+1)B(n,k),
\end{equation}
with initial condition $B(0,0)=1$ and boundary conditions $B(0,k)=0$ for $k\geq 1$.
An explicit formula for $B(n,k)$ is given as follows:
\begin{equation*}
B(n,k)=\sum_{i=0}^k(-1)^{i}\binom{n+1}{i}(2k-2i+1)^{n}
\end{equation*}
for $0\leq k\leq n$ (see~\cite{Eriksen00} for details).

{\it The unsigned Stirling number of the first kind} $\stirling{n}{k}$ is the number of permutations in $\msn$ with exactly $k$ cycles (see~\cite[A132393]{Sloane}). {\it The Stirling number of the second kind}
$\Stirling{n}{k}$ is the number of ways to partition $[n]$ into $k$ blocks (see~\cite[A008277]{Sloane}).
Let $\lah{n}{k}$ denote the number of ways to partition $[n]$ into $k$ nonempty linearly ordered subsets. The numbers $\lah{n}{k}$ are called {\it the unsigned Lah numbers} (see~\cite[A105278]{Sloane}).

We recall some known results on context-free grammars.
\begin{proposition}[{\cite[Eq. 4.8]{Chen93}}]
If $G=\{x\rightarrow xy, y\rightarrow y\}$,
then
\begin{equation*}
D^n(x)=x\sum_{k=1}^n\Stirling{n}{k}y^k.
\end{equation*}
\end{proposition}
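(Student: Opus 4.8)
The plan is to prove the identity by induction on $n$, using the product and chain rules for the formal derivative together with the classical recurrence for the Stirling numbers of the second kind, namely
\begin{equation*}
\Stirling{n+1}{k}=k\Stirling{n}{k}+\Stirling{n}{k-1}.
\end{equation*}
For the base case $n=1$, the grammar gives $D(x)=xy$ directly, which matches $x\sum_{k=1}^{1}\Stirling{1}{k}y^{k}=x\Stirling{1}{1}y=xy$. So the real content lies in the inductive step.

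First I would record the single computation that drives everything. For a fixed $k$, applying $D$ to the monomial $xy^{k}$ and using $D(x)=xy$, $D(y)=y$ together with $D(xy^{k})=D(x)y^{k}+x\,D(y^{k})$ and $D(y^{k})=ky^{k-1}D(y)$, I obtain
\begin{equation*}
D(xy^{k})=xy\cdot y^{k}+x\cdot ky^{k-1}\cdot y=xy^{k+1}+kxy^{k}.
\end{equation*}
Assuming $D^{n}(x)=x\sum_{k=1}^{n}\Stirling{n}{k}y^{k}$ and using linearity of $D$, I would then write
\begin{equation*}
D^{n+1}(x)=\sum_{k=1}^{n}\Stirling{n}{k}\,D(xy^{k})=x\sum_{k=1}^{n}\Stirling{n}{k}y^{k+1}+x\sum_{k=1}^{n}k\Stirling{n}{k}y^{k}.
\end{equation*}
Reindexing the first sum by $j=k+1$ and collecting the coefficient of $y^{k}$ across both sums yields $\Stirling{n}{k-1}+k\Stirling{n}{k}$, which by the Stirling recurrence equals $\Stirling{n+1}{k}$; this gives $D^{n+1}(x)=x\sum_{k=1}^{n+1}\Stirling{n+1}{k}y^{k}$ and closes the induction.

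I do not expect a genuine obstacle here, since the argument is a direct induction; the only point requiring care is the boundary bookkeeping in the reindexing step. Specifically, I would verify that after shifting the first sum to run over $j=2,\dots,n+1$ and keeping the second over $k=1,\dots,n$, the combined expression correctly covers the full range $k=1,\dots,n+1$, with the edge terms $\Stirling{n+1}{1}$ and $\Stirling{n+1}{n+1}$ being supplied respectively by the surviving $k=1$ term of the second sum and the $j=n+1$ term of the first. These are consistent with the boundary conventions $\Stirling{n}{0}=0$ for $n\geq 1$ and $\Stirling{n}{k}=0$ for $k>n$, so no exceptional cases arise.
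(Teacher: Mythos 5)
Your proof is correct: the base case, the key computation $D(xy^{k})=xy^{k+1}+kxy^{k}$, and the reindexing with the boundary conventions $\Stirling{n}{0}=0$ and $\Stirling{n}{k}=0$ for $k>n$ all check out against the recurrence $\Stirling{n+1}{k}=k\Stirling{n}{k}+\Stirling{n}{k-1}$. The paper itself does not prove this proposition (it cites Chen's Eq.~4.8), but your induction is exactly the technique the paper uses for its own theorems --- apply $D$ to the generic term of the expansion, collect coefficients, and match the resulting recurrence and initial conditions against those of the target numbers --- so your argument is essentially the same approach.
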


\begin{proposition}[{\cite[Section 2.1]{Dumont96}}]
If $G=\{x\rightarrow xy, y\rightarrow xy\}$,
then
\begin{equation*}
D^n(x)=x\sum_{k=0}^{n-1}\Eulerian{n}{k}x^{k}y^{n-k}.
\end{equation*}
\end{proposition}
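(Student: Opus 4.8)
The plan is to proceed by induction on $n$. For the base case $n=1$, I would check directly that $D(x)=xy$ agrees with the right-hand side, since the sum has the single term $\Eulerian{1}{0}x^0y^1=xy$. This settles the initial step and fixes the normalization coming from the initial condition $\Eulerian{0}{0}=1$ of the Eulerian recurrence.

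For the inductive step, I assume $D^n(x)=x\sum_{k=0}^{n-1}\Eulerian{n}{k}x^ky^{n-k}=\sum_{k=0}^{n-1}\Eulerian{n}{k}x^{k+1}y^{n-k}$ and apply $D$ term by term. Using the product rule together with the grammar rules $D(x)=xy$ and $D(y)=xy$, the two subcomputations I need are $D(x^{k+1})=(k+1)x^{k+1}y$ and $D(y^{n-k})=(n-k)xy^{n-k}$, both immediate from the chain rule. Combining them gives
$$D\bigl(x^{k+1}y^{n-k}\bigr)=(k+1)x^{k+1}y^{n-k+1}+(n-k)x^{k+2}y^{n-k}.$$

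Summing over $k$ expresses $D^{n+1}(x)$ as a combination of monomials of the form $x^{j+1}y^{n+1-j}$, and the crux is to collect, for each fixed $j$, the total coefficient of $x^{j+1}y^{n+1-j}$. The first family of terms contributes $(j+1)\Eulerian{n}{j}$ (taking $k=j$), while the second contributes $(n+1-j)\Eulerian{n}{j-1}$ (taking $k=j-1$). The coefficient is therefore $(j+1)\Eulerian{n}{j}+(n+1-j)\Eulerian{n}{j-1}$, which is exactly $\Eulerian{n+1}{j}$ by the Eulerian recurrence recalled in the introduction. Hence $D^{n+1}(x)=x\sum_{j=0}^{n}\Eulerian{n+1}{j}x^jy^{n+1-j}$, completing the induction.

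The computation is essentially routine; the only point requiring care is the bookkeeping of the two index shifts ($k\mapsto j=k$ from the first family and $k\mapsto j=k-1$ from the second), together with checking that the extreme values $j=0$ and $j=n$ are produced by exactly one of the two families, so that the assembled sum matches the Eulerian recurrence together with its boundary conditions. I expect this index matching, rather than the differentiation itself, to be the main thing to verify cleanly.
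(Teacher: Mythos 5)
Your proof is correct: the differentiation, the two index shifts, and the boundary cases $j=0$ and $j=n$ all check out (the latter because $\Eulerian{n}{-1}=\Eulerian{n}{n}=0$), and the collected coefficient $(j+1)\Eulerian{n}{j}+(n+1-j)\Eulerian{n}{j-1}$ is indeed $\Eulerian{n+1}{j}$ by the recurrence in the introduction. The paper itself gives no proof of this proposition (it is quoted from Dumont), but your argument is essentially the same recurrence-matching technique the paper uses for its own theorems---apply $D$ once to the claimed expansion, collect monomials, and compare with the defining recurrence and initial conditions---just phrased as a direct induction with the Eulerian numbers substituted in from the start rather than with undetermined coefficients.
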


\begin{proposition}[{\cite{Chen122}}]\label{Chen2}
If $G=\{x\rightarrow x^2y, y\rightarrow x^2y\}$,
then
\begin{equation*}
D^n(x)=\sum_{k=0}^{n-1}\left < \!\! \left < {n \atop k} \right > \!\! \right >x^{2n-k}y^{k+1}.
\end{equation*}
\end{proposition}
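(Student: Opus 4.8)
The plan is to proceed by induction on $n$, using the recurrence (\ref{Eu-second-recu}) as the target identity to verify at the level of coefficients. For the base case $n=1$, the grammar gives $D(x)=x^2y$, which matches $\sum_{k=0}^{0}\left < \!\! \left < {1 \atop 0} \right > \!\! \right >x^{2-k}y^{k+1}=x^2y$ since $\left < \!\! \left < {1 \atop 0} \right > \!\! \right >=1$. (The formula is vacuous for $n=0$, so induction starts at $n=1$.)

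For the inductive step, assume the claimed formula holds for $n$. The observation that makes the computation clean is that both production rules have the same right-hand side, so $D(x)=D(y)=x^2y$. Applying $D$ termwise to the inductive hypothesis via the product and chain rules, I would first compute
\begin{equation*}
D\left(x^{2n-k}y^{k+1}\right)=(2n-k)x^{2n-k-1}D(x)\,y^{k+1}+(k+1)x^{2n-k}y^{k}D(y),
\end{equation*}
and then substitute $D(x)=D(y)=x^2y$ to obtain
\begin{equation*}
D\left(x^{2n-k}y^{k+1}\right)=(2n-k)x^{2n-k+1}y^{k+2}+(k+1)x^{2n-k+2}y^{k+1}.
\end{equation*}

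Summing over $k$ expresses $D^{n+1}(x)$ as two families of monomials. The central bookkeeping step is to reindex so that every monomial appears in the standard form $x^{2(n+1)-j}y^{j+1}$: the first family contributes to the index $j=k+1$ and the second to $j=k$. Collecting the coefficient of $x^{2(n+1)-j}y^{j+1}$ then yields exactly
\begin{equation*}
(2n-j+1)\left < \!\! \left < {n \atop j-1} \right > \!\! \right >+(j+1)\left < \!\! \left < {n \atop j} \right > \!\! \right >,
\end{equation*}
which by (\ref{Eu-second-recu}) equals $\left < \!\! \left < {n+1 \atop j} \right > \!\! \right >$, completing the induction.

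The computation is routine, so the only delicate point I expect is the treatment of the endpoints. At $j=0$ the reindexed first family would involve $\left < \!\! \left < {n \atop -1} \right > \!\! \right >$, and at $j=n$ the second family would involve $\left < \!\! \left < {n \atop n} \right > \!\! \right >$; both vanish by the boundary conditions stated after (\ref{Eu-second-recu}). Checking that these vanishings make the reindexing consistent with the summation limits, so that no spurious terms are introduced and no genuine terms are dropped, is the one spot worth writing out carefully.
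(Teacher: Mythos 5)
Your proof is correct: the termwise differentiation using $D(x)=D(y)=x^2y$, the reindexing into $j=k+1$ and $j=k$, and the endpoint checks via the boundary conditions $\left < \!\! \left < {n \atop -1} \right > \!\! \right >=\left < \!\! \left < {n \atop n} \right > \!\! \right >=0$ all go through and reproduce exactly the recurrence~\eqref{Eu-second-recu}. The paper itself cites this proposition from~\cite{Chen122} without reproducing a proof, but your coefficient-recurrence-matching induction is precisely the technique the paper uses to prove its ``dual'' result on $(xD)^n(x)$ and its main theorem, so your approach is essentially the same as the paper's.
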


\begin{proposition}[{\cite{Ma12}}]
If $G=\{x\rightarrow y^2, y\rightarrow xy\}$,
then
$$D^n(x)=\sum_{k=0}^{\lrf{({n-1})/{2}}}W_{n,k}x^{n-2k-1}y^{2k+2},\quad D^n(y)=\sum_{k=0}^{\lrf{{n}/{2}}}W_{n,k}^{\textit{l}}x^{n-2k}y^{2k+1},$$
where $W_{n,k}$ is the number of permutations in $\msn$ with $k$ interior peaks and $W^{\textit{l}}_{n,k}$ is the number of permutations in $\msn$ with $k$ left peaks.
\end{proposition}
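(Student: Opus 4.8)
The plan is to prove the two identities by a single induction on $n$, using the crucial observation that they decouple. Since $D^{n+1}(x)=D(D^n(x))$ and the grammar $G$ specifies $D(x)=y^2$ and $D(y)=xy$ without reference to any higher derivative, the expansion of $D^{n+1}(x)$ is determined entirely by that of $D^n(x)$; likewise $D^{n+1}(y)$ is determined by $D^n(y)$. Hence the two statements can be driven by independent inductions. For the base case $n=1$ we compute directly $D(x)=y^2$ and $D(y)=xy$; since the unique permutation in $\msn$ for $n=1$ has neither an interior peak nor a left peak, these agree with the claimed formulas once we note $W_{1,0}=W_{1,0}^{\textit{l}}=1$.

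For the inductive step I would first record the effect of $D$ on an arbitrary monomial. From $D(x)=y^2$, $D(y)=xy$ and the product rule,
$$D(x^ay^b)=ax^{a-1}y^{b+2}+bx^{a+1}y^{b}.$$
Applying this to the term $W_{n,k}x^{n-2k-1}y^{2k+2}$ of $D^n(x)$ and collecting the coefficient of the generic monomial $x^{n-2k}y^{2k+2}$ of $D^{n+1}(x)$, the contribution comes from the second summand of $D(x^{n-2k-1}y^{2k+2})$ together with the first summand of $D(x^{n-2k+1}y^{2k})$, yielding
$$(2k+2)W_{n,k}+(n-2k+1)W_{n,k-1}.$$
The same computation applied to $D^n(y)$ gives, as the coefficient of $x^{n+1-2k}y^{2k+1}$ in $D^{n+1}(y)$, the value $(2k+1)W_{n,k}^{\textit{l}}+(n-2k+2)W_{n,k-1}^{\textit{l}}$. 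Thus the induction closes once we know that the peak statistics satisfy
$$W_{n+1,k}=(2k+2)W_{n,k}+(n-2k+1)W_{n,k-1},\qquad W_{n+1,k}^{\textit{l}}=(2k+1)W_{n,k}^{\textit{l}}+(n-2k+2)W_{n,k-1}^{\textit{l}},$$
with the boundary conventions $W_{n,k}=W_{n,k}^{\textit{l}}=0$ for $k<0$ or $k$ out of range matching the vanishing of the stray terms at the ends of the sums.

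The main obstacle is therefore to establish these two peak recurrences, and I would do so by inserting the largest letter. Given $\pi\in\msn$ with $j$ interior peaks, form a permutation of $[n+1]$ by placing $n+1$ into one of the $n+1$ slots; since $n+1$ dominates every other entry, a slot-by-slot analysis shows that exactly $2j+2$ slots leave the interior peak number unchanged while the remaining $n-2j-1$ slots raise it by one, which is precisely the asserted recurrence. The left-peak count is handled the same way, except that the leftmost slot must be treated separately because of the convention $\pi(0)=0$; this shifts the split to $2j+1$ peak-preserving slots and $n-2j$ peak-increasing slots, giving the second recurrence. The delicate point will be the careful classification of slots as peak-preserving versus peak-creating near the two ends of the word, which is exactly where the interior and left conventions diverge; alternatively, one may simply invoke these recurrences as the known defining relations for the numbers $W_{n,k}$ and $W_{n,k}^{\textit{l}}$.
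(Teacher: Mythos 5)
This proposition is not proved in the paper at all: it is Proposition~4, quoted as a known result from~\cite{Ma12}, so there is no internal proof to measure your argument against; I can only judge it on its own merits and against the paper's general method. On its merits your proof is correct. Under this grammar one indeed has $D(x^ay^b)=ax^{a-1}y^{b+2}+bx^{a+1}y^{b}$, your extraction of the coefficient of $x^{n-2k}y^{2k+2}$ in $D^{n+1}(x)$ (resp.\ of $x^{n+1-2k}y^{2k+1}$ in $D^{n+1}(y)$) is accurate, and the two recurrences you reduce everything to,
$$W_{n+1,k}=(2k+2)W_{n,k}+(n-2k+1)W_{n,k-1},\qquad W_{n+1,k}^{\textit{l}}=(2k+1)W_{n,k}^{\textit{l}}+(n-2k+2)W_{n,k-1}^{\textit{l}},$$
are exactly the classical recurrences for interior and left peaks; a spot check at $n=3$ gives $W_{3,0}=4$, $W_{3,1}=2$, $W_{3,0}^{\textit{l}}=1$, $W_{3,1}^{\textit{l}}=5$, matching $D^3(x)=4x^2y^2+2y^4$ and $D^3(y)=x^3y+5xy^3$. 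Your insertion argument for these recurrences is also sound: placing $n+1$ in a slot flanking an existing peak trades that peak for a new one, the two end slots are neutral for interior peaks (only the right end slot is unconditionally neutral for left peaks, since inserting $n+1$ at the front creates a left peak at position $1$ unless it destroys one already there), and every remaining slot raises the count by one, giving the $2j+2$ versus $n-2j-1$ split and the $2j+1$ versus $n-2j$ split. Structurally, your argument follows precisely the template the paper uses for the results it does prove (Theorem~5 and part $(c_1)$ of Theorem~6): posit the expansion with unknown coefficients, apply $D$ once to read off a recurrence, and identify it with the known recurrence of the target sequence. The only genuine difference is that you go one step further and prove the peak recurrences combinatorially rather than citing them, which makes your proof self-contained where the paper (and, in the same spirit, its Theorem~6 proof, which stops at ``same recurrence and initial conditions'') simply invokes known facts.
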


The purpose of this paper is to show that some combinatorial sequences, such as second-order Eulerian numbers and Eulerian numbers of type $B$, can be generated by context-free grammars.
\section{Results}\label{sec:result}
For $n\geq 0$, we always assume that
$$(xD)^{n+1}(x)=(xD)(xD)^n(x)=xD((xD)^n(x)).$$
The following theorem is in a sense ``dual" to Proposition~\ref{Chen2}.
\begin{theorem}
If $G=\{x\rightarrow xy, y\rightarrow xy\}$, then
\begin{equation*}
(xD)^n(x)=\sum_{k=0}^{n-1}\left < \!\! \left < {n \atop k} \right > \!\! \right >x^{2n-k}y^{k+1} \quad {\text for}\quad n\geq 1.
\end{equation*}
\end{theorem}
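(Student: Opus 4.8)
The plan is to argue by induction on $n$, using the recurrence (\ref{Eu-second-recu}) as the engine. The base case $n=1$ is immediate: since $D(x)=xy$, we have $(xD)(x)=x\cdot xy=x^2y$, which matches the right-hand side because $\left < \!\! \left < {1 \atop 0} \right > \!\! \right >=1$.

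For the inductive step, I first record how the operator $xD$ acts on an arbitrary monomial. From the grammar rules $D(x)=xy$ and $D(y)=xy$, together with the product and chain rules, one gets $D(x^a)=ax^ay$ and $D(y^b)=bxy^b$, hence
\begin{equation*}
D(x^ay^b)=ax^ay^{b+1}+bx^{a+1}y^b,\qquad\text{so}\qquad xD(x^ay^b)=ax^{a+1}y^{b+1}+bx^{a+2}y^b.
\end{equation*}
Applying this to each term of the inductive hypothesis, taking $a=2n-k$ and $b=k+1$, and multiplying through, I obtain $(xD)^{n+1}(x)$ as a sum of two families of monomials.

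The key step is then to collect, for each $j$, the coefficient of $x^{2n+2-j}y^{j+1}$ in this expression. The first family contributes at the index $j=k+1$ with weight $2n-k=2n-j+1$, while the second family contributes at the index $j=k$ with weight $k+1=j+1$. Reindexing accordingly, the coefficient of $x^{2n+2-j}y^{j+1}$ becomes
\begin{equation*}
(2n-j+1)\left < \!\! \left < {n \atop j-1} \right > \!\! \right >+(j+1)\left < \!\! \left < {n \atop j} \right > \!\! \right >,
\end{equation*}
which is exactly the right-hand side of (\ref{Eu-second-recu}) and therefore equals $\left < \!\! \left < {n+1 \atop j} \right > \!\! \right >$. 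This yields the claimed formula for $(xD)^{n+1}(x)$ and closes the induction.

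I expect the only delicate point to be the bookkeeping at the boundaries $j=0$ and $j=n$: one must check that the two reindexed sums together range over $0\leq j\leq n$ and that the terms not produced by both families are supplied correctly by the boundary conditions $\left < \!\! \left < {n \atop k} \right > \!\! \right >=0$ for $k<0$ or $n\leq k$. Everything else is a routine application of the product rule, so this boundary matching is where I would focus the verification.
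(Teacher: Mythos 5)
Your proof is correct and follows essentially the same route as the paper's: both compute the action of $xD$ on a monomial $x^ay^b$, reindex the two resulting families of terms, and identify the coefficient recurrence with~\eqref{Eu-second-recu} (the paper phrases this as defining coefficients $E(n,k)$ and showing they satisfy the same recurrence and initial conditions, which is your induction in slightly different clothing). Your explicit attention to the boundary indices $j=0$ and $j=n$, handled by the conventions $\left < \!\! \left < {n \atop k} \right > \!\! \right >=0$ for $k<0$ or $n\leq k$, is a point the paper passes over silently, and your check there is right.
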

\begin{proof}
For $n\geq 1$, we define
\begin{equation}\label{Enk-def}
(xD)^n(x)=\sum_{k=0}^{n-1}E(n,k)x^{2n-k}y^{k+1}.
\end{equation}
Note that $$(xD)(x)=x^2y, (xD)(x^2y)=x^4y+2x^3y^2.$$
Then $E(1,0)=\left < \!\! \left < {1 \atop 0} \right > \!\! \right >=1, E(2,0)=\left < \!\! \left < {2 \atop 0} \right > \!\! \right >=1$ and $E(2,1)=\left < \!\! \left < {2 \atop 1} \right > \!\! \right >=2$.
Using~\eqref{Enk-def}, we obtain
$$(xD)(xD)^n(x)=\sum_{k=0}^{n-1}(2n-k)E(n,k)x^{2n-k+1}y^{k+2}+
\sum_{k=0}^{n-1}(k+1)E(n,k)x^{2n-k+2}y^{k+1}.$$
Therefore,
\begin{equation*}\label{Enk-recu}
E(n+1,k)=(2n-k+1)E(n,k-1)+(k+1)E(n,k).
\end{equation*}
Comparing with~\eqref{Eu-second-recu}, we see that
the coefficients $E(n,k)$ satisfy the same recurrence and initial conditions as $\left < \!\! \left < {n \atop k} \right > \!\! \right >$, so they agree.
\end{proof}

Now we present the main result of this paper.
\begin{theorem}
For $n\geq 1$, we have the the following results:
\begin{itemize}
\item [({$c_1$})]If $G=\{x\rightarrow xy^2,y\rightarrow x^2y\}$,
 then
\begin{equation*}\label{Bnk-der}
D^n(xy)=xy\sum_{k=0}^nB(n,k)x^{2n-2k}y^{2k}.
\end{equation*}
\item [({$c_2$})]If $G=\{x\rightarrow xy^2,y\rightarrow x^2y\}$,
 then
\begin{equation*}
D^n(x^2y^2)=2^nx^2y^2\sum_{k=0}^n\Eulerian{n+1}{k}x^{2n-2k}y^{2k}.
\end{equation*}
\item [({$c_3$})]If $G=\{x\rightarrow xy^2,y\rightarrow x^2y\}$,
 then
\begin{equation*}
D^n(x)=x\sum_{k=1}^nN(n,k)x^{2n-2k}y^{2k},
\end{equation*}
where the number $N({n,k})$ enumerates perfect matchings of $[2n]$ with the restriction that only $k$ matching pairs have odd smaller entries (see~\cite[A185411]{Sloane}).
   \item [({$c_4$})]If $G=\{x\rightarrow xy,y\rightarrow xy\}$,
then $$D^n(xy^r)=x\sum_{k=0}^{n}\Eulerian{n+r}{k}_{r}x^ky^{n+r-k}.$$
  \item [({$c_5$})]If $G=\{x\rightarrow xy^2,y\rightarrow xy\}$,
then $$D^n(x)=x\sum_{k=0}^{n-1}2^k\Eulerian{n}{k}x^ky^{2n-2k}.$$
  \item [({$c_6$})]Consider the numbers $T(n,k)$ with generating function
  $$\sqrt{A(2t,z)}=1+\sum_{n\geq 1}\sum_{k=1}^nT(n,k)t^k\frac{z^n}{n!},$$
where $A(t,z)$ is given by~\eqref{Atz} (see~\cite[A156920]{Sloane}. If $G=\{x\rightarrow xy^2,y\rightarrow xy\}$,
then $$D^n(y)=\sum_{k=1}^{n}T(n,k)x^ky^{2n-2k+1}.$$
\item [({$c_7$})]If $G=\{x\rightarrow x^2y,y\rightarrow y\}$,
then $$D^n(x)=x\sum_{k=1}^nk!{n \brace k}x^ky^k.$$
\item [({$c_8$})]If $G=\{x\rightarrow x^2y,y\rightarrow y^2\}$,
then $$D^n(x)=x\sum_{k=1}^nk!\stirling{n}{k}x^{k}y^n.$$
\item [({$c_9$})]If $G=\{x\rightarrow xy^2,y\rightarrow y^2\}$,
 then
\begin{equation*}
D^n(x)=x\sum_{k=1}^n\lah{n}{k}y^{n+k}.
\end{equation*}
\item [({$c_{10}$})]If $G=\{x\rightarrow xy^3,y\rightarrow y^3\}$,
then $$D^n(x)=x\sum_{k=1}^{n}b(n,k)y^{2n+k},$$
where $b(n,k)$ is the number of forests with $k$ rooted ordered trees with $n$ non-root vertices labeled in an organic way (see~\cite[A035342]{Sloane}).
\item [({$c_{11}$})]For a fixed positive integer $r\geq 4$, if $G=\{x\rightarrow xy^r,y\rightarrow y^r\}$,
then $$D^n(x)=x\sum_{k=1}^{n}a(n,k;r)y^{(r-1)n+k},$$
where $a(n,k;r)$ enumerates unordered $n$-vertex $k$-forests composed of $k$ plane increasing $r$-ary trees (see~\cite[A035469,A049029,A049385,A092082]{Sloane}).
\item [({$c_{12}$})]If $G=\{x\rightarrow x^2y,y\rightarrow xy\}$,
then $$D^n(y)=x^n\sum_{k=1}^nd(n,k)y^k,$$
where $d(n,k)$ is the number of increasing mobiles (circular rooted trees) with $n$ nodes and $k$ leaves (see~\cite[A055356]{Sloane}).
\end{itemize}
\end{theorem}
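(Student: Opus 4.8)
The plan is to prove all twelve identities by a single inductive scheme. The key observation is that every grammar occurring in $(c_1)$--$(c_{12})$ sends $x$ and $y$ to monomials, so the formal derivative of an arbitrary monomial is again a sum of exactly two monomials,
\begin{equation*}
D(x^{a}y^{b})=a\,x^{a-1}D(x)\,y^{b}+b\,x^{a}y^{b-1}D(y).
\end{equation*}
For a fixed part I would write the claimed right-hand side as $\sum_{k}c(n,k)\,m_{n,k}$, where $m_{n,k}$ is the relevant monomial in $x,y$ and $c(n,k)$ is the sequence in question, and apply $D$ once more. Since $D(m_{n,k})$ splits into two monomials whose exponents are fixed shifts of those of $m_{n,k}$, each monomial $m_{n+1,k}$ of $D^{n+1}(\cdot)$ picks up contributions only from $m_{n,k}$ and $m_{n,k-1}$. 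Reading off these contributions produces a two-term recurrence $c(n+1,k)=\alpha(n,k)\,c(n,k-1)+\beta(n,k)\,c(n,k)$, and the proof of that part reduces to checking the base case $n=1$ and identifying $\alpha,\beta$ with the recurrence of the target sequence. This is precisely the mechanism of the preceding theorem, so I would set it up once and then run it grammar by grammar.

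For the parts whose target obeys a recurrence already recorded in Section~\ref{sec:intro} or is classical, the matching is immediate. With $\{x\to xy^{2},\,y\to x^{2}y\}$ one has $D(x^{a}y^{b})=a\,x^{a}y^{b+2}+b\,x^{a+2}y^{b}$; tracking exponents yields for $(c_1)$ exactly the type $B$ recurrence~\eqref{Bnk-Euleriannum}, and for $(c_2)$ the Eulerian recurrence shifted to $\Eulerian{n+2}{k}$, the built-in factor $2^{n}$ cancelling. Similarly $(c_5)$ collapses to the Eulerian recurrence after dividing out the factor $2^{k}$, while $(c_7)$ and $(c_8)$ collapse to $\Stirling{n+1}{k}=\Stirling{n}{k-1}+k\,\Stirling{n}{k}$ and $\stirling{n+1}{k}=\stirling{n}{k-1}+n\,\stirling{n}{k}$ after dividing out $k!$, and $(c_9)$ to the Lah recurrence $\lah{n+1}{k}=\lah{n}{k-1}+(n+k)\,\lah{n}{k}$.

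The parts $(c_3)$, $(c_4)$, $(c_{10})$, $(c_{11})$, $(c_{12})$ yield equally clean recurrences --- for example $N(n+1,k)=(2n-2k+3)N(n,k-1)+2k\,N(n,k)$ for $(c_3)$, $a(n+1,k;r)=a(n,k-1;r)+((r-1)n+k)\,a(n,k;r)$ for $(c_{11})$, and $d(n+1,k)=n\,d(n,k-1)+k\,d(n,k)$ for $(c_{12})$ --- but here the target sequences are defined combinatorially, so the remaining work is to show that the matching, forest, and mobile families satisfy these same recurrences. For $(c_{10})$--$(c_{12})$ I would use the standard ``insert the largest label'' bijection on the relevant increasing tree families, the two terms corresponding to attaching the new vertex to an existing tree versus opening a new one; for $(c_3)$ the analogue is an insertion argument on perfect matchings that tracks the parity of the smaller entries. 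Part $(c_4)$ is of the same type: the grammar $\{x\to xy,\,y\to xy\}$ gives $D(x^{a}y^{b})=a\,x^{a}y^{b+1}+b\,x^{a+1}y^{b}$ and hence
\begin{equation*}
\Eulerian{n+r+1}{k}_{r}=(k+1)\Eulerian{n+r}{k}_{r}+(n+r-k+1)\Eulerian{n+r}{k-1}_{r},
\end{equation*}
which I would establish for the $r$-restricted Eulerian numbers by inserting the largest value into a permutation counted by $\pn$ and recording the change in the number of excedances, specializing to the classical Eulerian argument when $r=0$.

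The genuinely different case, and the one I expect to be the main obstacle, is $(c_6)$, because $T(n,k)$ is defined through the generating function $\sqrt{A(2t,z)}$ rather than by a recurrence. The grammar $\{x\to xy^{2},\,y\to xy\}$ gives $T(n+1,k)=k\,T(n,k)+(2n-2k+3)\,T(n,k-1)$, which cannot be matched termwise against anything already listed. Here I would pass to the exponential generating function $F=\sum_{n\geq0}D^{n}(y)\,z^{n}/n!$, which satisfies the first-order partial differential equation
\begin{equation*}
\partial_{z}F=D(F)=xy^{2}\,\partial_{x}F+xy\,\partial_{y}F,\qquad F|_{z=0}=y.
\end{equation*}
Setting $t=x/y^{2}$ and factoring out $y$, the identity $(c_6)$ is equivalent to the closed form $F=y\,\sqrt{A(2t,\,y^{2}z)}$, so it suffices to verify that this expression solves the displayed PDE. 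That verification reduces to the Riccati identity $\partial_{z}A=A^{2}-(1-t)A$ satisfied by the Eulerian generating function~\eqref{Atz}, after which a routine chain-rule computation closes the argument. Establishing $(c_6)$ by checking the generating-function identity, rather than by fighting the square root termwise, is the crux of the theorem.
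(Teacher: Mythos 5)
Your core mechanism --- write the claimed expansion with unknown coefficients, apply $D$ once to shift exponents, read off a two-term recurrence, and match it against the target sequence's recurrence and base case --- is exactly the paper's proof. The paper carries this out only for $(c_1)$, obtaining $G(n+1,k)=(2n-2k+3)G(n,k-1)+(2k+1)G(n,k)$ and citing~\eqref{Bnk-Euleriannum}, and then dismisses the remaining eleven parts with ``the others can be proved in a similar way.'' All of the recurrences you extract from the grammars are correct (I checked each of $(c_1)$--$(c_{12})$, including the cancellation of $2^n$ in $(c_2)$ and of $2^k$ in $(c_5)$), and you are right to point out that for $(c_3)$, $(c_4)$, $(c_{10})$--$(c_{12})$ recurrence-matching is only half the argument: one must still show that the combinatorially defined sequences satisfy those recurrences, which the paper never does. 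Your insertion-type arguments are the standard way to supply this, so on these parts your write-up is more complete than the paper's.

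The one step that would fail as written is in $(c_6)$. The verification that $F=y\sqrt{A(2t,y^{2}z)}$, $t=x/y^{2}$, solves $\partial_{z}F=xy^{2}\partial_{x}F+xy\partial_{y}F$ does \emph{not} reduce to the Riccati identity $\partial_{z}A=A^{2}-(1-t)A$: the vector field $xy^{2}\partial_{x}+xy\partial_{y}$ does not annihilate $t$ (it sends $t$ to $x(1-2t)$), so the chain rule inevitably produces $\partial_{t}A$ terms, about which the Riccati identity, involving only the $z$-derivative, says nothing. What the computation actually reduces to, writing $s=2t$ and $w=y^{2}z$, is the bivariate identity
\begin{equation*}
(1-sw)\,\partial_{w}A(s,w)=s(1-s)\,\partial_{s}A(s,w)+sA(s,w),
\end{equation*}
which is the generating-function form of the Eulerian recurrence. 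Fortunately it holds and is just as routine to check from~\eqref{Atz}: with $E=e^{w(1-s)}$ and $Q=1-sE$ one finds $\partial_{w}A=s(1-s)^{2}E/Q^{2}$ and $s(1-s)\partial_{s}A+sA=s(1-s)^{2}(1-sw)E/Q^{2}$, so both sides equal $s(1-s)^{2}(1-sw)E/Q^{2}$. Substituting this identity for the Riccati one repairs your argument for $(c_6)$; as stated, the ``routine chain-rule computation'' would not close.
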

\begin{proof}
We only prove $(c_1)$ and the others can be proved in a similar way.
Note that $D(x)=xy^2$ and $D(y)=x^2y$. Then
$$D(xy)=xy(x^2+y^2),D^2(xy)=D(D(xy))=xy(x^4+6x^2y^2+y^4).$$
For $n\geq 1$, we define
\begin{equation*}
D^n(xy)=xy\sum_{k=0}^{n}G(n,k)x^{2n-2k}y^{2k}.
\end{equation*}
Hence $G(1,0)=B(1,0)$ and $G(1,1)=B(1,1)$.
Since
$$D^{n+1}(xy)=D(D^n(xy))=\sum_{k=0}^n(2n-2k+1)G(n,k)x^{2n-2k+1}y^{2k+3}+\sum_{k=0}^n(2k+1)G(n,k)x^{2n-2k+3}y^{2k+1},$$
there follows
\begin{equation*}
G(n+1,k)=(2n-2k+3)G(n,k-1)+(2k+1)G(n,k).
\end{equation*}
It follows from~\eqref{Bnk-Euleriannum} that
$G(n,k)$ satisfies the same recurrence and initial conditions as $B(n,k)$, so they agree.
\end{proof}



\end{document}